\documentclass{amsart}[12pt]
\def\doctype{}

\usepackage{latexsym,amssymb,dsfont}
\usepackage{tikz}
\usetikzlibrary { decorations.pathmorphing, decorations.pathreplacing, decorations.shapes, }
\usepackage{caption}
\usepackage{color}
\usepackage{fancyhdr}
\usepackage{hyperref}
\usepackage{amsmath}
\hypersetup{
colorlinks=true,
citecolor=blue,
linkcolor=blue,
  urlcolor=blue
}

\newcommand{\comment}[1]{}

\numberwithin{equation}{section}

\fancypagestyle{titlepage}{
\fancyhead[R]{\doctype}
\fancyhead[CL]{}
\cfoot{\vspace{5pt} \thepage}
}

\let\oldsection\section
\newcommand\boldsection[1]{\oldsection{\bf #1}}
\newcommand\starsection[1]{\oldsection*{\bf #1}}
\makeatletter
\renewcommand\section{\@ifstar\starsection\boldsection}
\makeatother

\newtheoremstyle{theorem}
  {12pt}		  % space above
  {0pt}  % space below
  {\sl}  % bofy font
  {\parindent}     % ident - empty=no indent,  \parindent= paragraph indent
  {\bf}  % thm head font
  {. }    % punctuation after thm head
  { }    % space after thm head: `` ``=normal \newline=linebreak
  {}     % thm head specification
\theoremstyle{theorem}
\newtheorem{thm}{Theorem}[section]  % 1st argument is your name for it
\newtheorem{lemma}[thm]{Lemma}     % 2nd argument is what is printed

\newtheorem{cons}[thm]{Construction}

\newtheoremstyle{definition}
  {12pt}		  % space above
  {0pt}  % space below
  {}  % bofy font
  {\parindent}     % ident - empty=no indent,  \parindent= paragraph indent
  {\bf}  % thm head font
  {. }    % punctuation after thm head
  { }    % space after thm head: `` ``=normal \newline=linebreak
  {}     % thm head specification
\theoremstyle{definition}

\newtheorem{ex}[thm]{Example}

\renewcommand{\proofname}{Proof}

\makeatletter
\renewenvironment{proof}[1][\proofname]{\par
  \pushQED{\qed}%
  \normalfont \partopsep=\z@skip \topsep=\z@skip
  \trivlist
  \item[\hskip\labelsep
        \scshape
    #1\@addpunct{.}]\ignorespaces
}{%
  \popQED\endtrivlist\@endpefalse
}
\makeatother

\makeatletter
\renewcommand*\@maketitle{%
  \normalfont\normalsize
  \@adminfootnotes
  \@mkboth{\@nx\shortauthors}{\@nx\shorttitle}%
  \global\topskip42\p@\relax % 5.5pc   "   "   "     "     "
  \@settitle
  \ifx\@empty\authors \else {\vskip 1em
\vtop{\centering\shortauthors\@@par}} \fi
  \ifx\@empty\@date \else {\vskip 1em \vtop{\centering\@date\@@par}}\fi % MYCHANGE
  \ifx\@empty\@dedicatory
  \else
    \baselineskip18\p@
    \vtop{\centering{\footnotesize\itshape\@dedicatory\@@par}%
      \global\dimen@i\prevdepth}\prevdepth\dimen@i
  \fi
  \@setabstract
  \normalsize
  \if@titlepage
    \newpage
  \else
    \dimen@34\p@ \advance\dimen@-\baselineskip
    \vskip\dimen@\relax
  \fi
} % end \@maketitle
\renewcommand*\@adminfootnotes{%
  \let\@makefnmark\relax  \let\@thefnmark\relax
  \ifx\@empty\@subjclass\else \@footnotetext{\@setsubjclass}\fi
  \ifx\@empty\@keywords\else \@footnotetext{\@setkeywords}\fi
  \ifx\@empty\thankses\else \@footnotetext{%
    \def\par{\let\par\@par}\@setthanks}%
  \fi
\thispagestyle{titlepage}
}
\makeatother

\makeatletter \let\c@table\c@figure \makeatother

\newtheoremstyle{claimstyle}{4pt}{0pt}{}{15pt}{\bfseries}{.}{.5em}{}
\theoremstyle{claimstyle}

\begin{document}

\title[]{\large Intersection numbers of Sudoku latin squares}

\author{Jade S.~Davies and Peter J.~Dukes}
\address{
Mathematics and Statistics,
University of Victoria, Victoria, BC, Canada
}
\email{jadedavies@uvic.ca; dukes@uvic.ca}

\date{\today}
\begin{abstract}
Let $n=hw$, where $h$ and $w$ are integers with $h,w \ge 2$.
We determine the set of possible intersection numbers of two $n \times n$ latin squares having the additional `Sudoku' constraint based on a $w \times h$ grid of $h \times w$ boxes.
\end{abstract}

%\thanks{}

\maketitle
\hrule

\bigskip

\section{Introduction}
\label{sec:intro}

A \emph{latin square} of order $n$ is an $n \times n$ array with entries from a set of $n$ symbols (often taken to be $[n]:=\{1,2,\dots,n\}$) having the property that each symbol appears exactly once in every row and in every
column.  A \emph{partial latin square} of order $n$ is an $n
\times n$ array whose cells are either empty or filled with one of $n$ symbols in such a way that each symbol appears at most once in every row and in every column.
A partial latin square can be identified with the set of all ordered triples $(i,j,k)$ such that the $(i,j)$-entry contains symbol $k$.  

Given two latin squares $L$ and $L'$ of order $n$, we define their intersection $L \cap L'$ as the partial latin square consisting of all common entries in common positions of both $L$ and $L'$.  In terms of ordered triples, 
$$L \cap L' = \{(i,j,k): L_{ij}=L'_{ij}=k\}.$$

Let $\mathcal{L}_n$ denote the set of all latin squares of order $n$ with symbol set $[n]$.
The {\em intersection spectrum} for latin squares of order $n$ is defined as 
$$I(n) = \{|L \cap L'|: L,L' \in \mathcal{L}_n \}.$$ 
Notice that it is impossible for the intersection $L \cap L'$ to contain exactly one empty cell in any row or column.  It is easy to see from this that
$n^2-5,n^2-3,n^2-2,n^2-1 \not\in I(n)$. 
Define
$$\Upsilon(n):=\{0,1,2,\dots,n^2-6,n^2-4,n^2\}.$$
It was shown by Fu and Fu \cite{Fu}, and later Howell \cite{Howell}, that $I(n)=\Upsilon(n)$ for $n \ge 5$.  Below is the full determination of $I(n)$, including the small cases.

\begin{thm}
[\cite{Fu,Howell}]
\label{thm:int-ls}
$$I(n) =  
\begin{cases}
\{1\} & \mathrm{if~}n=1; \\
\{0,4\} & \mathrm{if~}n=2; \\
\{0,3,9\} & \mathrm{if~}n=3; \\
\{0,1,2,3,4,6,8,9,12,16\} & \mathrm{if~}n=4; \\
\Upsilon(n) & \mathrm{if~}n\ge 5. 
\end{cases}
$$
\end{thm}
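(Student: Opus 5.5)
Theorem~\ref{thm:int-ls} has two halves. One is exclusion: certain values cannot occur. The other is realization: every remaining value is attained by an explicit pair of latin squares. I would treat them separately, with the small orders handled by hand or exhaustive search and $n\ge 5$ handled by a recursive construction.

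\emph{Exclusion.} The key observation is the one noted above. Let $T=L\setminus(L\cap L')$ be the set of disagreeing cells. Then $T$ together with the corresponding entries of $L'$ forms a latin bitrade: the two partial squares occupy the same cells, and each row and column contains the same symbols in each. In particular, no row or column of $T$ contains exactly one cell. A nonempty trade must therefore have at least $4$ cells. If it has exactly $4$, those cells form an intercalate ($2\times 2$ subsquare) pattern. With $5$ cells some row meets $T$ in at least two cells and at most one other line can carry the remaining cell, which forces a line with exactly one cell. A short case analysis of the row and column multisets excludes sizes $1,2,3,5$. This gives $n^2-5,n^2-3,n^2-2,n^2-1\notin I(n)$.

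For $n=2,3,4$ I would get the extra exclusions by exhaustive reasoning.
\begin{itemize}
\item For $n=2$ there are only two latin squares, giving $\{0,4\}$.
\item For $n=3$, every latin square is isotopic to the cyclic one. A trade in a latin square of order $3$ must have size $6$ or $9$, and size $9$ gives intersection $0$, so only $0,3,9$ occur.
\item For $n=4$, the missing values are $5,7,10,11$ (besides those already excluded). Here I would use the known classification of the trades in the two main classes of order-$4$ squares, the cyclic group and the Klein group. Minimal trades have size $4$ or $6$, and unions of disjoint trades give sizes in $\{4,6,7,\dots\}$. I expect this to be verified most cleanly by computer. The cases $n=1$ and $n=2$ are trivial.
\end{itemize}

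\emph{Realization for $n\ge 5$.} I would follow Fu--Fu and Howell and argue by strong induction. The base cases $n=5,6,7,8$ are done by direct constructions; searching for a witness pair for each target is feasible. For the inductive step I would use two standard tools.
\begin{itemize}
\item \textbf{Direct product and inflation.} If $n=ab$, then inflating a pair from $I(a)$ by $b$ copies gives intersections of the form $\sum_{c} x_c$, where the $x_c$ range over $I(b)$ across the $a^2$ cells agreeing at the top level, plus cross terms. This yields all sums of $a^2$ elements of $I(b)$ under suitable compatibility.
\item \textbf{Prolongation.} For $n\mapsto n+1$ or $n+2$, apply the Bose/Hall prolongation along transversals. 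An intersection of a pair of order $n$ sharing a common transversal extends, and by choosing how the new row, column and transversal agree, the value shifts by a controllable amount. Alternatively, embed a latin square of order $m\ge n/2$ as a subsquare (Evans/Ryser completion) and vary the intersection inside the subsquare while fixing the complement.
\end{itemize}
With either route, the target is that $\{x + t : x\in I(m),\ t\in S\}$ covers $\Upsilon(n)$. Here $S$ is the set of achievable intersection sizes in the outer region, which should be an interval.

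\emph{Main obstacle.} The hard part is the top end of $\Upsilon(n)$, namely the values $n^2-6$, $n^2-4$, and those just below $n^2-6$. These require trades of size exactly $4$, $6$, $7$, $8,\dots$, so I would realize them directly. A single intercalate gives $n^2-4$, and an intercalate-free trade of size $6$ gives $n^2-6$. Sizes $7,\dots$ come from disjoint unions of small trades ($4+4=8$, etc.) placed in one suitable square, such as the cyclic square of order $n$, which contains $6$-cycle trades. Size $7$ needs its own small trade, built from three rows. I would first check that the cyclic (or a Cayley-table) square of order $n\ge5$ contains disjoint trades of sizes $4$, $6$ and $7$ (or $4$ and $6$) in enough positions to cover every size from $6$ up to the small sizes. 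The induction then covers the low and middle ranges.
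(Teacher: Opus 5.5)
Your exclusion half is correct, and it is the only argument the paper itself gives. The disagreement set $T$ meets each of its $r$ rows and $c$ columns at least twice, so $2r,2c\le |T|\le rc$, which rules out $|T|\in\{1,2,3,5\}$. Your cases $n=2,3$ are also fine: over $\mathbb{Z}_3$ every square is $\alpha i+\beta j+\gamma$, and its intersections with $i+j$ are $0,3,9$.

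For the realization half the paper has no argument of its own; it cites Fu--Fu and Howell. You try to reprove that half, and what you give is a plan rather than a proof.
\begin{itemize}
\item Inflation with a common outer square and independent inner pairs (essentially Construction~\ref{cons:triangle}) at best transfers $\Upsilon(b)$, $b\ge 5$, to $\Upsilon(ab)$. It gives nothing for prime $n$. It also fails for $n=9$, since sums of elements of $I(3)=\{0,3,9\}$ are multiples of $3$.
\item So the whole induction rests on the prolongation/embedding step. There you only assert that the shift is ``controllable'' and that the outer set $S$ ``should be an interval''. That assertion is exactly the content of the Fu--Fu/Howell constructions, and you do not supply it.
\item The embedding is also backwards. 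A proper subsquare of order $m$ in a square of order $n$ needs $m\le n/2$. Varying inside it therefore only reaches values at least $n^2-m^2\ge \frac{3}{4}n^2$. Everything below that depends on the unproved claim about $S$.
\end{itemize}

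Several concrete claims are also false.
\begin{itemize}
\item A trade of size $6$ occupies two rows and three columns, three rows and two columns, or the cells of two symbols forming a $6$-cycle. In each case a permutation linking two rows, two columns, or two symbols must contain a $3$-cycle.
\item In the cyclic square these permutations are translations of $\mathbb{Z}_n$, so $3$-cycles occur only when $3\mid n$. Hence $n^2-6$ is not realized against the cyclic square for $n=5,7,8,\dots$. For prime $n\ne 3$ no Cayley table helps either, since the only group is cyclic.
\item The same reasoning shows that squares of order $4$ have no $6$-cell trades at all, because these permutations are derangements of four points. Your remark that minimal trades there have size $4$ or $6$ would put $16-6=10$ into $I(4)$, contradicting the statement you are proving.
\item The extra exclusions for $n=4$ are $5,7,10$; the value $11=4^2-5$ is already covered by the general argument.
\end{itemize}

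Deferring $n=4$ to a computer check is acceptable. For $n\ge 5$, though, you must either cite Fu--Fu and Howell for sufficiency, as the paper does, or give explicit constructions. In particular, prime orders need pairs with prescribed agreement on the new row, new column and transversal, with the resulting values checked to cover all of $\Upsilon(n)$.
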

Let $h$ and $w$ be integers with $h,w \ge 2$, and put $n=hw$. A \emph{Sudoku latin square} (or briefly \emph{Sudoku}) \emph{of type} $(h,w)$ is a latin square of order $n$ whose cells are partitioned into $h \times w$ subarrays in which each symbol appears exactly once in every subarray.  The subarrays are called \emph{boxes}, or sometimes \emph{cages}.  A partial Sudoku and completion of such is defined analogously as above for latin squares.  The problem of completing a partial Sudoku in the case $h=w=3$  is a famous recreational puzzle.  A mathematical discussion of Sudoku solving strategies can be found in \cite{Sudoku2,Sudoku1}. 

In this paper, we study the intersection problem for Sudoku latin squares of type $(h,w)$. In more detail, let $\mathcal{SL}_{h,w}$ denote the set of all Sudoku latin squares of type $(h,w)$ having a common partition into $h \times w$ boxes and a common set of  symbols. Define
$$I(h,w) = \{|L \cap L'|: L,L' \in \mathcal{SL}_{h,w} \}.$$ 
Our main result is stated below.

\begin{thm}
\label{thm:int-sudoku}
Let $h,w \ge 2$.  Then
$$I(h,w) =  
\begin{cases}
\{0,1,2,3,4,6,8,9,12,16\} & \mathrm{if~}h=w=2; \\
\Upsilon(hw) & \mathrm{otherwise}. 
\end{cases}
$$
\end{thm}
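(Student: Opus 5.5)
The upper containment is immediate. Every Sudoku of type $(h,w)$ is a latin square of order $n=hw$, so $I(h,w)\subseteq I(hw)$. By Theorem~\ref{thm:int-ls}, for $n\ge 5$ this gives $I(h,w)\subseteq \Upsilon(hw)$. For $h=w=2$ it gives $I(2,2)\subseteq\{0,1,2,3,4,6,8,9,12,16\}$. So the work is in realizing every value.

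For $h=w=2$ there are only finitely many Sudokus. We exhibit explicit pairs for each of the ten values. This can be organized by starting from one fixed Sudoku $L$ and applying Sudoku-preserving symmetries: symbol permutations, swapping rows within a band, swapping bands, and the corresponding column operations. Values such as $0,4,8,12,16$ come from swapping two rows or two bands, or from suitable symbol permutations. The values $1,2,3,6,9$ need specific small trades, which we simply list in a table.

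For the general case we use a standard trade-based strategy.

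\emph{Large intersections.} Take $L'$ obtained from $L$ by a trade that lives inside the Sudoku structure. The basic pieces are:
\begin{itemize}
\item intercalates (a $2\times 2$ subsquare on two symbols) contained in a single box, or in one row of boxes, each removing $4$ cells;
\item a swap of two symbols inside a box-row structure, removing $2w$ or $2h$ cells per band;
\item larger cyclic trades, for instance cycling three symbols across a $2\times 3$ region, removing $6$ cells.
\end{itemize}
We fix a base Sudoku with abundant disjoint intercalates, namely the standard "group" Sudoku $L_{(i,j)}=$ symbol determined by $\mathbb{Z}_h\times\mathbb{Z}_w$ coordinates. Disjoint combinations of size-$4$ and size-$6$ trades, plus one size-$7$ or size-$5$ trade if available, cover all removal sizes $d\in\{4,6,7,\dots\}$ up to some threshold. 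This yields every value $n^2-d$.

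\emph{Small intersections.} Go the other way: start with $L'$ disjoint from $L$, for example by permuting symbols via a derangement, which gives intersection $0$. Then raise the intersection count in controlled steps. One option is to compose with partial symbol permutations restricted to bands or stacks. Another, cleaner option is a product/inflation construction. View a Sudoku of type $(h,w)$ as built from latin squares of orders $h$ and $w$ arranged via a direct product. If $L=A\times B$ and $L'=A'\times B'$, the intersection size is $|A\cap A'|\cdot|B\cap B'|$ in a suitable arrangement. More flexibly, fill each box position with independently chosen compatible transversal-like blocks, so that the intersection becomes a sum of contributions from bands. Using Theorem~\ref{thm:int-ls} for the band-level pieces, these sums can be made to cover an initial segment $\{0,\dots,N\}$.

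The main obstacle is gluing these two ranges together. We must cover every integer in the middle, up to $n^2-6$, and also handle small types such as $(2,3)$, $(2,4)$, $(3,3)$ and $(2,w)$, where boxes are thin and $2\times 2$ intercalates are scarce. My first attempt is a band decomposition. The square splits into $w$ bands, each a $h\times n$ array, and we can modify bands almost independently by permuting columns within stacks consistently. That makes the total intersection a sum of per-band intersection numbers. We then induct on the number of bands, proving that the per-band spectrum is an interval with a few gaps near the top. The remaining sporadic small types would be settled by explicit computer-found examples.
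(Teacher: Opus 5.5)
Your upper containment is right, and settling $(2,2)$ and the other tiny types by explicit squares is also what the paper does. The heart of the theorem, however, is realizing every value of $\Upsilon(hw)$, and your proposal does not establish it.

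\textbf{The gluing step.} You acknowledge that joining the small and large ranges is open. The band decomposition you propose for it fails as stated. Distinct columns of a box carry disjoint symbol sets. So permuting the columns of a single band within a stack always repeats a symbol in some column. Doing the same permutation in every band is just a global column permutation. Hence bands cannot be altered independently this way, and the intersection does not split into free per-band terms.

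\textbf{The large-intersection part.} This has concrete defects too:
\begin{itemize}
\item There is no latin trade of size $5$ at all.
\item Disjoint trades of sizes $4,6,7$ never remove exactly $9$ cells, so $n^2-9$ is not reached by the trades you list.
\item When $h$ and $w$ are both odd, the group Sudoku from $\mathbb{Z}_h\times\mathbb{Z}_w$ has no intercalates. A Cayley table has one only if the group has an element of order $2$.
\item You never show that the needed trades can be placed disjointly while keeping the box condition. This is exactly where the paper needed computer-found squares for $n^2-6,n^2-9,n^2-11$ when $w=4$.
\end{itemize}

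\textbf{The missing idea.} You gesture at it with ``independently chosen blocks''; the paper makes it precise as Construction~\ref{cons:triangle}. Fix a latin square $L$ of order $h$. Replace the entry $k$ in cell $(i,i')$ by an arbitrary latin square $M^i_k$ of order $w$ on the symbol bundle $S_k$, chosen independently for each pair $(i,k)$. Now list the $j$th row of every row bundle together as the $j$th band. This gives a Sudoku of type $(h,w)$, in which the free blocks are spread one row per band rather than confined to a band or a box. Two such squares over the same $L$ meet in $\sum_{i,k}|M^i_k\cap N^i_k|$ cells, so any sum of $h^2$ elements of $I(w)$ is attained.

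This single construction covers the top and bottom of the spectrum at once, so nothing needs gluing. For $w\ge 5$, write $t=qw^2+r$ with $0\le r<w^2$.
\begin{itemize}
\item If $q=h^2-1$, the values excluded from $\Upsilon(hw)$ force $r\in\Upsilon(w)$.
\item If $q\le h^2-2$ and $r>w^2-6$, split $r=(w^2-6)+s$ with $1\le s\le 5$ over two blocks.
\end{itemize}
For $w=4$ the same argument with $I(4)$ misses only $n^2-6,n^2-9,n^2-11$. These values, together with the types $(2,2),(2,3),(3,3)$, are the only cases needing explicit examples.
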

In other words, we obtain the same intersection spectrum as in Theorem~\ref{thm:int-ls}, even with the additional box/cage constraint present in Sudoku latin squares.

\section{Product constructions}

In this section, it is convenient to allow sets other than $[n]$ for row/column indexing and symbols.  Of course, a choice of row/column ordering and symbol relabeling can be imposed, and this does not affect either the latin square property nor intersection sizes.  It is also convenient to use the notation $L(x,y)$ in place of $L_{xy}$ for the entry in row $x$ and column $y$ of $L$.

As a warm-up, we review the standard Kronecker/tensor product construction for latin squares.  Suppose $L$ is a latin square of order $n$ and $M$ is a latin square of order $m$. Their \emph{product} $L \otimes M$ is defined as a latin square of order $nm$ with rows and columns indexed by $[n] \times [m]$, and entries
$$(L \otimes M)((i_1,j_1),(i_2,j_2))=(L(i_1,i_2),M(j_1,j_2)).$$
After imposing a row/column ordering and relabelling, this is equivalent to replacing the symbols in $L$ by $n$ disjoint `copies' of $M$.

\begin{ex}
A product of latin squares of orders $2$ and $3$ is shown below.  Here, we apply the re-indexing
$(1,1) \mapsto 1$, $(1,2) \mapsto 2$, $(1,3) \mapsto 3$, $(2,1) \mapsto 4$, $(2,2) \mapsto 5$, $(2,3) \mapsto 6$ for ordering rows/columns and relabeling symbols.
$$
\begin{array}{|cc|}
\hline
1&2\\
2&1\\
\hline
\end{array} \otimes
\begin{array}{|ccc|}
\hline
1&2&3\\
2&3&1\\
3&1&2\\
\hline
\end{array}
=
\begin{array}{|cccccc|}
\hline
(1,1)&(1,2)&(1,3)&(2,1)&(2,2)&(2,3)\\
(1,2)&(1,3)&(1,1)&(2,2)&(2,3)&(2,1)\\
(1,3)&(1,1)&(1,2)&(2,1)&(2,2)&(2,3)\\
(2,1)&(2,2)&(2,3)&(1,1)&(1,2)&(1,3)\\
(2,2)&(2,3)&(2,1)&(1,2)&(1,3)&(1,1)\\
(2,3)&(1,2)&(2,2)&(1,3)&(1,1)&(1,2)\\
\hline
\end{array}
\equiv
\begin{array}{|cccccc|}
\hline
1&2&3&4&5&6\\
2&3&1&5&6&4\\
3&1&2&6&4&5\\
4&5&6&1&2&3\\
5&6&4&2&3&1\\
6&4&5&3&1&2\\
\hline
\end{array}
$$
\end{ex}

A useful observation for our work to follow is that the product of latin squares of order $h$ and $w$ is, after a suitable row/column permutation, a Sudoku latin square of type $(h,w)$.

\begin{ex}
Continuing from the example above, listing the rows of the product in the order $1,4,2,5,3,6$, yields a Sudoku latin square of type $(2,3)$, with boxes as shown.
$$
\begin{array}{|cc|}
\hline
1&2\\
2&1\\
\hline
\end{array} \otimes
\begin{array}{|ccc|}
\hline
1&2&3\\
2&3&1\\
3&1&2\\
\hline
\end{array}
=
\begin{array}{|cccccc|}
\hline
1&2&3&4&5&6\\
2&3&1&5&6&4\\
3&1&2&6&4&5\\
4&5&6&1&2&3\\
5&6&4&2&3&1\\
6&4&5&3&1&2\\
\hline
\end{array}
\rightarrow
\begin{array}{|ccc|ccc|}
\hline
1&2&3&4&5&6\\
4&5&6&1&2&3\\
\hline
2&3&1&5&6&4\\
5&6&4&2&3&1\\
\hline
3&1&2&6&4&5\\
6&4&5&3&1&2\\
\hline
\end{array}
$$
\end{ex}

Let $L_1,L_2$ be latin squares of order $h$ and $M_1,M_2$ be latin squares of order $w$.  It is easy to see that $$|(L_1 \otimes M_1) \cap (L_2 \otimes M_2)| = |L_1 \cap L_2| \times |M_1 \cap M_2|.$$
Although this achieves many values in $I(h,w)$, products on their own are not sufficient to collect all intersection values.
This motivates a generalization of the product construction in which the disjoint copies of $M$ are allowed to vary.

\begin{cons}
\label{cons:triangle}
Let $L$ be a latin square of order $n$ and suppose $\mathcal{M}=\{M_k^i: i,k \in [n]\}$ is a set of $n^2$ latin squares of order $m$, where the symbol set of each $M_k^i$ is $\{(k-1)m+1,(k-1)m+2,\dots,km\}$.  Then the block matrix $L \triangleleft \mathcal{M}$
defined by 
$$(L \triangleleft \mathcal{M})((i,j),(i',j')) =  M_{L(i,i')}^i (j,j'),$$
where $i,i' \in [n]$ and $j,j' \in [m]$, is a latin square of order $mn$.  Moreover, if the rows of $L \triangleleft \mathcal{M}$ are listed in the order 
$$1,m+1,2m+1,\dots,(n-1)m+1,2,m+2,2m+2,\dots,(n-1)m+2,\dots m,2m,3m,\dots,nm,$$
then the result is a Sudoku latin square of type $(n,m)$.
\end{cons}

\begin{proof}
%[Proof of Construction~\ref{cons:triangle}]
Let $S_k$ denote the `symbol bundle' $\{(k-1)m+1, (k-1)m+2, \ldots, km\}$ for each of the squares $M_k^i$.  Symbols in $S_k$ get used in the $(i,i')$-block of $L \triangleleft \mathcal{M}$ whenever $L(i,i')=k$. The $S_k$ are disjoint intervals of integers with $\cup_{i=1}^n S_k = [mn]$.
It follows that $L \triangleleft \mathcal{M}$ is a square array of order $mn$ on symbol set $[mn]$.

Consider a row $(i,j)$ of $L \triangleleft \mathcal{M}$, where $i \in [n]$ identifies the `row bundle' and $j \in [m]$ identifies the row position within the given bundle.  This row can be partitioned into $n$ segments using the column bundles $i' \in [n]$, and each such segment contains the symbols from 
$\{M_{L(i,i')}^i(j,j'): j'\in [m]\}$. Since $L$ is a latin square, the map $i' \mapsto L(i,i')$ is a bijection, so $L(i,i')$ takes on each value $k \in [n]$ exactly once for each $i$.
        It follows that the $M_{L(i,i')}^i$ use symbol bundle $S_k$ exactly once when $i$ is fixed and $i'$ varies over $[n]$.  Moreover, each $M_{L(i,i')}^i$ is a latin square, so its $j$th row contains each symbol from $S_{L(i,i')}$ exactly once. Putting together the segments, each row $(i,j)$ of $L \triangleleft \mathcal{M}$ contains every symbol in $\cup_{k=1}^n S_k = [mn]$ exactly once.  Similar reasoning shows that each column $(i',j')$ contains every symbol exactly once.  This proves that $L \triangleleft \mathcal{M}$ is a latin square of order $mn$. 
     
    Now suppose that we have reordered the rows of 
$L \triangleleft \mathcal{M}$ as indicated in the construction. Let $(L \triangleleft \mathcal{M})^*$ denote this new array.  This is certainly still a latin square of order $mn$, so it suffices to show that $(L \triangleleft \mathcal{M})^*$ is a Sudoku of type $(n,m)$.
    
In $(L \triangleleft \mathcal{M})^*$, the first $n$ rows are those with row index $(i,1)$ in $L \triangleleft \mathcal{M}$, and in general the $j$th block of $n$ rows came from rows $(i,j)$ of $L \triangleleft \mathcal{M}$, where $i$ ranges over $[n]$. Consider the box $B_{ji'}$ of 
$(L \triangleleft \mathcal{M})^*$ defined by the $j$th block of $n$ rows and column bundle $i'$.  The $i$th row within $B_{ji'}$ has symbols from the $j$th row of $M_{L(i,i')}^i$.  Since this is a latin square on symbols $S_{L(i,i')}$, we get precisely these symbols, in some order, in this row of $B_{ji'}$. As we vary $i$,each symbol bundle $S_k$ appears exactly once in this row of the box, since $L$ is a latin square.  This shows that $B_{ji'}$ contains every symbol in $[mn]$.  It follows that the $B_{ji'}$, $j \in [m]$, $i' \in [n]$, furnish a partition of
$(L \triangleleft \mathcal{M})^*$ into an $m \times n$ array of $n \times m$ boxes, each containing every symbol exactly once, as required for a Sudoku of type $(n,m)$.
\end{proof}

We are now ready to prove our main result in all but a few cases.

\begin{proof}[Proof of Theorem~\ref{thm:int-sudoku}] (assuming $\max(h,w) \ge 5$)
Let $t \in \Upsilon(hw)$.  By interchanging the roles of $h$ and $w$ if necessary, let us assume $w \ge 5$.
We claim that $t$ can be written as a sum of $h^2$ integers in $\Upsilon(w)$. 

Using the division algorithm, write $t=qw^2+r$, where $0 \le r \le w^2-1$.  If $q=h^2$, then $r=0$ and we have $t = h^2 w^2$.  If $q=h^2-1$, we have $r \in \Upsilon(w)$ and can take $t = (h^2-1)w^2+r$.  Suppose $q \le h^2-2$.  If $r \le w^2-6$, then $r \in \Upsilon(w)$ and we're done as above.  Otherwise, we can write $r=(w^2-6)+s$, where $s \in \{1,2,3,4,5\}$, and we have $r,s \in \Upsilon(w)$.

Suppose then that $t=\sum_{i,j=1}^h t_{ij}$, where $t_{ij} \in \Upsilon(w)$.
Let $L$ be any latin square of order $h$.  Let $\mathcal{M}=\{M_i^j:i,j \in [h]\}$ and $\mathcal{N}=\{N_i^j:i,j \in [h]\}$ be families of latin squares of order $w$, with symbol sets as in Construction~\ref{cons:triangle}, and such that 
$|M_i^j \cap N_i^j|=t_{ij}$.
Then 
$$t=\sum_{i,j=1}^h t_{ij} 
=|(L \triangleleft \mathcal{M}) 
\cap (L \triangleleft \mathcal{N})| 
=|(L \triangleleft \mathcal{M})^* 
\cap (L \triangleleft \mathcal{N})^*| 
\in I(h,w),$$ as required.
\end{proof}

\section{Small cases and special constructions}

In this section, we consider $I(h,w)$ for the remaining cases in which $\max(h,w) \le 4$.  
As before, we may assume without loss of generality that $w \ge h$.  When $w=4$, we note in the following lemma that $I(4)$ can be used to realize most intersection values in $\Upsilon(n)$ for $I(h,4)$.

\begin{lemma}
\label{lem:w=4}
Let $h\in \{2,3,4\}$, and put $n=4h$. Then 
$$I(h,4) \supseteq \Upsilon(n) \setminus \{n^2-x:x =6,9,11\}.$$
\end{lemma}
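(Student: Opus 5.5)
The plan is to reuse the argument from the proof of Theorem~\ref{thm:int-sudoku} for $\max(h,w)\ge 5$, with $\Upsilon(w)$ replaced by $I(4)=\{0,1,2,3,4,6,8,9,12,16\}$ from Theorem~\ref{thm:int-ls}. Construction~\ref{cons:triangle} is applied with $L$ any latin square of order $h$ and two families $\mathcal{M},\mathcal{N}$ of $h^2$ latin squares of order $4$. This realizes, as an element of $I(h,4)$, every sum $t=\sum_{i,j\in[h]} t_{ij}$ with each $t_{ij}\in I(4)$. So it suffices to show that every $t\in\Upsilon(n)\setminus\{n^2-6,n^2-9,n^2-11\}$ is a sum of $h^2$ elements of $I(4)$, counting zeros as summands.

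Since $0\in I(4)$, the problem reduces to describing the set $\Sigma_k$ of sums of at most $k=h^2\in\{4,9,16\}$ elements of $I(4)$. The cleanest route is to count deficiencies: write $t=16h^2-d$, so each summand $16-e$ has $e\in D:=\{0,4,7,8,10,12,13,14,15,16\}$. We then need to show that every $d\in\{0,4\}\cup\{6,7,\dots,16h^2\}$ other than $6,9,11$ is a sum of $h^2$ elements of $D$.

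\begin{itemize}
\item For small $d$: $0$ and $4$ are immediate, and $7,8,10,12,\dots,16$ are single elements of $D$. Also $17=10+7$ (or $13+4$), $18=14+4$, $19=15+4$, and $20=16+4$. By extending with further terms in the same way (e.g.\ $d\ge 20$ as $16+\cdots$ plus a remainder), every $d\ge 12$ is obtained.
\item The values $d=6,9,11$ are not achievable: one term can give only $0,4,7,8,10$, and two nonzero terms give at least $8$, with $4+4=8$, $4+7=11$ not available beyond that. So these are exactly the exclusions in the statement. Since $n^2-6\in\Upsilon(n)$, we must list $n^2-6$ separately, and the statement does so.
\item For general $d$: write $d=16a+r$ with $0\le r<16$. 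Use $a$ copies of $16$, and represent $r$ by at most two elements of $D$ (e.g.\ $r=6$ as $10+12-16$, i.e.\ borrow one $16$). When $r\in\{1,2,3,5,6,9,11\}$, borrow a $16$ and express $16+r\in\{17,18,19,21,22,25,27\}$ as a sum of two elements: $10+7$, $14+4$, $15+4$, $14+7$, $14+8$, $15+10$, $15+12$.
\end{itemize}

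The main obstacle is the term-count limit for $h=2$, where only four summands are available. At the top end, we must check that every $d\le 64$ (i.e.\ every $t\ge 0$) is a sum of at most four elements of $D$, handled by the same borrowing. In particular, $d$ near $64$ with $r$ requiring two terms would need five terms. Equivalently, and more simply, we must check that small $t$ such as $t=1,2,3,5,7,10,11,\dots$ are sums of four elements of $I(4)$. For small $t$, working directly with $I(4)$ is easier: for example $5=4+1$, $7=4+3$, $10=9+1$, $11=8+3$, and in general each $t\le 64$ is $a\cdot 16$ plus a remainder in $\{0,\dots,15\}$, which is a sum of at most two elements of $I(4)$ ($5=4+1$, $7=6+1$, $10=9+1$, $11=9+2$, $13=12+1$, $14=12+2$, $15=12+3$). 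Thus $t=16a+r$ uses at most $a+2$ terms. This is fine when $a\le h^2-2$. When $a=h^2-1$ with $r$ not in $I(4)$, we are in the range $t=n^2-16+r$ for $r\in\{5,7,10,11,13,14,15\}$. The value $r=10$ corresponds to $n^2-6$, which is excluded. The others are handled by rewriting with the deficiency decomposition above, e.g.\ $t=n^2-9$ is excluded and $n^2-5,n^2-3,n^2-2,n^2-1\notin\Upsilon$. I expect this short finite check near the top, done for each $h$, to be the only delicate step.
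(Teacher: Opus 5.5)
Your proof is correct, and its final paragraph is essentially the paper's argument. You write $t=16q+r$ and split $r$ into at most two elements of $I(4)$ when $q\le h^2-2$. When $q=h^2-1$, the three exclusions together with the gaps in $\Upsilon(n)$ force $r\in I(4)$. You should also state that the case $q=h^2-1$ with $r\in I(4)$, and the case $q=h^2$ (which forces $r=0$), are immediate.

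Two of your side remarks are false, although neither is load-bearing.

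\textbf{The value $d=11$ is reachable.} You claim $d=11$ is not a sum of elements of $D$, yet you write $4+7=11$ yourself. Hence $n^2-11=16(h^2-2)+12+9$ is realized by the construction. So the three exclusions are not ``exactly'' the unreachable values: among the deficiencies allowed by $\Upsilon(n)$, only $d=6$ and $d=9$ are unreachable. This does no harm, since the lemma only asserts a containment. It does show that excluding $n^2-11$ in the statement is unnecessary.

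\textbf{No five-term problem arises.} Suppose $d=16a+r$ with $r\notin D$, and you borrow one $16$ to write $d=16(a-1)+e_1+e_2$. This uses $a+1$ nonzero terms. Since $r\neq 0$ and $d\le 16h^2$, we get $a\le h^2-1$, so $a+1\le h^2$. The borrowing step needs $a\ge 1$, and this holds for every allowed $d$ with $r\notin D$ except $d=11=4+7$.

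So your third bullet, with this one-line term count, is already a complete proof. In that deficiency form, the paper's separate case $q=h^2-1$ disappears. The ``delicate finite check near the top'' you anticipate does not exist.
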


\begin{proof}
Let $t\in\Upsilon(n)\setminus\{n^2-x:x=6,9,11\}$. Similar to the proof above, we use intersections of the form $(L \triangleleft \mathcal{M}) \cap (L \triangleleft \mathcal{N})$ where $L$ is a latin square of order $h$ and those in $\mathcal{M},\mathcal{N}$ have order $4$. It suffices to show that $t$ can be written as a sum of $h^2$ integers from $\Upsilon(4)$. Write $t=16q+r$, where $0\leq r\leq 15$. If $q=h^2$ then $r=0$ and we simply use pairs of identical latin squares of order $4$ to obtain the intersection value $t=16h^2 \in \Upsilon(n)$. If $q=h^2-1$, then the restriction on $t$ implies $r\in \Upsilon(4)$.  We use $q=h^2-1$ summands $16$ together with $r$.

Now suppose $q \le h^2-2$. We claim that each value $r\in\{0,1,\ldots,16\}$ can be expressed as a sum of two integers $k,l\in\Upsilon(4)$. If $r\in\Upsilon(4)$ simply take $k=r$ and $l=0$. If $r\not\in\Upsilon(4)$ we use Table~\ref{tab:ups4} to find suitable values for $k$ and $l$.
\begin{center}
\begin{table}[htbp]
    \begin{tabular}{c|cccccccc}
  $r$ & 5 & 7 & 10 & 11 & 13 & 14 & 15 \\
  \cline{1-8}
$k$ & 3 & 4 & 6 & 8 & 9 & 8 & 9 & $\in \Upsilon(4)$\\
$l$ & 2 & 3 & 4 & 3 & 4 & 6 & 6 & $\in \Upsilon(4)$\\
\end{tabular}
    \caption{Expressing each $r \not\in \Upsilon(4)$ as a sum $r=k+l$, where $k,l\in\Upsilon(4)$.}
    \label{tab:ups4}
\end{table}
\end{center}

It follows that for $q\leq h^2-2$, we can express $t=16q+k+l$ and obtain all intersection values $t<16(h^2-1)$. Together with the above, this shows that $I(h,4) \supseteq \Upsilon(n) \setminus \{n^2-x:x =6,9,11\}$.
\end{proof}

We implemented a computer search to achieve the missing values in Lemma~\ref{lem:w=4} for each $h=2,3,4$, and also to show $I(h,w)=I(hw)$ when $h,w \le 3$.  As it turns out, we were able in all cases to identify a single Sudoku latin square $L$, and family of Sudoku latin squares $\{L_i\}$ of the same type such that $|L \cap L_i|=i$.  The basic method simply involved generating several Sudoku latin squares, and collecting those with a variety of intersection values.  However, it is difficult to achieve intersection values $i$ near the maximum $n^2$ `at random'.  To speed up these cases, we implemented a variation of the latin square Markov process \cite{JM} to drift only a few steps from $L$.  This usually produces a Sudoku latin square $L$ with relatively large 
$|L \cap L'|$.  Likewise, to collect small intersection values, it was helpful to start the Markov process on a row-derangement of $L$.  After a few steps, the resulting Sudoku latin square $L'$ usually has small intersection with $L$.

\begin{lemma}
\label{lem:exceptions4}
For $h\in \{2,3,4\}$ and $n=4h$, $I(h,4) \supseteq \{n^2-x:x =6,9,11\}.$
\end{lemma}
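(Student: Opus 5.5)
We need Sudoku squares $L,L'$ of type $(h,4)$ with $|L\cap L'| = n^2-x$ for $x\in\{6,9,11\}$. Equivalently, $L\setminus L'$ must be a \emph{Sudoku trade} of volume $x$: a partial Sudoku $T\subseteq L$ together with a disjoint partial Sudoku $T'$ on the same cells, with every row, column and box containing the same symbols in $T$ and in $T'$. Replacing $T$ by $T'$ in $L$ then gives a Sudoku $L'$ with $|L\cap L'|=n^2-x$.

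The product structure reduces the work. Take $L=(A\triangleleft\mathcal{M})^*$ from Construction~\ref{cons:triangle}. If we modify a single $4\times 4$ block $M_k^i$ by a latin trade, we only reach intersections $16-|M\cap M'|$ lost inside that block. Combined with identical blocks elsewhere, this gives $x\in\{16-s: s\in I(4)\}=\{0,4,7,8,10,12,13,14,15,16\}$. Using two blocks gives sums of two such values, but none of $6,9,11$ is such a sum, since the smallest nonzero value is $4$. So the trades must cross block boundaries, interacting with the order-$h$ structure or the Sudoku boxes. Note that a box of type $(h,4)$ spans $h$ rows, one from each row bundle, so a trade may move symbols between different $M^i$'s in the same column bundle.

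For $x=6$, I would look for the classical volume-$6$ latin trade, on a $3\times 3$ subarray with pattern of cells forming a "cyclic" derangement, and place it so that its three rows lie in one Sudoku box-row and its three columns in one column bundle. For example, use a $3$-cycle of symbols on three rows and three columns of a single box. Such a trade automatically preserves box contents when all cells lie in the same box or the boxes are consistent. For $x=9$, the natural candidate is a $3\times3$ subsquare-like trade ($9$ cells, each row and column of the $3\times 3$ subarray permuted by a derangement), again arranged within a box-row/column-bundle. For $x=11$, one can combine a $4$-cycle intercalate-type piece with a volume-$7$-style piece, or directly seek an $11$-cell trade.

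Rather than derive these by hand, I would follow the method the paper describes. For each $h\in\{2,3,4\}$, fix a concrete Sudoku $L$ of type $(h,4)$, for instance a suitable product. Then run the Markov-chain drift from $L$ a few steps until Sudoku squares with intersection $n^2-6$, $n^2-9$ and $n^2-11$ are found, and exhibit them explicitly, or exhibit the trades.

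The main obstacle is $x=6$ and $x=11$ for $h=2$ ($n=8$). There the rigid $2\times4$ boxes severely restrict small trades, so a $6$-cell trade satisfying the row, column and box conditions simultaneously may exist only in special $L$. If hand-constructing fails, I would rely on an exhaustive search over trades of volume $6$, $9$ and $11$ in a few candidate Sudoku squares of each type. The proof would then consist of displaying the six (or nine) resulting pairs of squares, which can be verified directly.
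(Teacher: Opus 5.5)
\paragraph{There is a genuine gap.} Your fallback plan is the paper's whole proof: for each $h\in\{2,3,4\}$ the paper displays a Sudoku $L$ together with computer-found squares $L_{n^2-6},L_{n^2-9},L_{n^2-11}$. But this lemma has no content beyond those witnesses. A proposal that says ``search until found'' therefore establishes nothing, and the explicit constructions you offer in place of search cannot work.

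\emph{No latin trade fits inside a single Sudoku box.} Let $(r,c)$ be a trade cell that receives a new symbol $\sigma$. Then $\sigma$ must already occur among the old trade entries of row $r$, say at $(r,c')$ with $c'\neq c$. It must also occur among the old trade entries of column $c$, say at $(r',c)$ with $r'\neq r$. These two cells of $L$ carry the same symbol, so they lie in different boxes. Your placement, with three rows in one box-row and three columns in one column bundle, is exactly a single box when $h\ge 3$. When $h=2$ it is impossible, since a box-row has only two rows. The same objection kills your $3\times 3$ subsquare idea for $x=9$.

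Working trades straddle several boxes and inherit the box condition from rows or columns. For $h=3$ you can do this by hand. In the product Sudoku $(A\otimes M)^*$, with rows reordered as in Construction~\ref{cons:triangle}, each copy of $A$ (fixed $j,j'$) occupies the three rows of box-row $j$ and one column from each column bundle. Take $A$ to be the cyclic square of order $3$.
\begin{itemize}
\item Swapping two symbols along the $6$-cycle in that copy gives $x=6$.
\item Replacing the copy by a disjoint order-$3$ latin square gives $x=9$.
\end{itemize}
In both cases the changed cells in each box lie in a single column, so the column condition gives the box condition. For $h\in\{2,4\}$ the copies of $A$ have order $2$ or $4$, and latin squares of those orders have no trades of volume $6$ or $9$. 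There you really do need explicit squares such as the paper's. For example, in $L_{58}$ for $h=2$, symbols $6$ and $7$ are swapped along a $6$-cycle through rows $2,5,6$ that meets three boxes in two cells each.

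\paragraph{Your claim about $11$ is false, and correcting it makes $x=11$ free.} You say none of $6,9,11$ is a sum of two block deficiencies. But $11=4+7=(16-12)+(16-9)$. Hence
$$n^2-11=16(h^2-2)+12+9, \qquad 12,9\in I(4).$$
Apply Construction~\ref{cons:triangle} with $h^2-2$ identical block pairs, one pair meeting in $12$ cells and one pair meeting in $9$. This gives $n^2-11\in I(h,4)$ for every $h\ge 2$ with no search at all. It is simpler than the paper's treatment: Lemma~\ref{lem:w=4} allows only one non-full block when $q=h^2-1$, so the paper lists $11$ as exceptional and handles it by computer. Only $x=6$ and $x=9$ genuinely need trades crossing block boundaries. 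Your ``main obstacle'' should therefore be $x\in\{6,9\}$ for $h\in\{2,4\}$, not $x=11$.
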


\begin{proof}
We give latin squares $L$ and $L_i$, found as in the discussion above, for the three exceptional values of $i$.  Discrepancies in $L_i$ from entries in $L$ are displayed in bold.
We handle the case $h=2$ below.  The cases $h=3$ and $h=4$ are done similarly in the Appendix.
\end{proof}

\begin{center}
$\begin{array}{rr}
L=\begin{array}{|cccc|cccc|}
\hline
0 & 1 & 2 & 3 & 4 & 5 & 6 & 7 \\
6 & 4 & 7 & 5 & 2 & 1 & 3 & 0 \\
\hline
5 & 2 & 4 & 6 & 3 & 0 & 7 & 1 \\
1 & 3 & 0 & 7 & 6 & 4 & 2 & 5 \\
\hline
2 & 5 & 6 & 1 & 0 & 7 & 4 & 3 \\
7 & 0 & 3 & 4 & 5 & 6 & 1 & 2 \\
\hline
3 & 6 & 1 & 0 & 7 & 2 & 5 & 4 \\
4 & 7 & 5 & 2 & 1 & 3 & 0 & 6 \\
\hline
\end{array}
&
\hspace{1cm}
L_{53}=\begin{array}{|cccc|cccc|}
\hline
0 & \bf 2 & \bf 1 & 3 & 4 & 5 & 6 & 7 \\
6 & 4 & 7 & 5 & 2 & 1 & 3 & 0 \\
\hline
5 & \bf 6 & 4 & \bf 2 & 3 & 0 & 7 & 1 \\
1 & 3 & 0 & 7 & 6 & 4 & 2 & 5 \\
\hline
2 & 5 & 6 & 1 & 0 & 7 & 4 & 3 \\
7 & 0 & 3 & 4 & 5 & \bf 2 & 1 & \bf 6 \\
\hline
3 & \bf 1 & \bf 2 & 0 & 7 & \bf 6 & 5 & 4 \\
4 & 7 & 5 & \bf 6 & 1 & 3 & 0 & \bf 2 \\
\hline
\end{array}\\
\\
L_{55}=\begin{array}{|cccc|cccc|}
\hline
0 & 1 & \bf 3 & \bf 2 & 4 & 5 & 6 & 7 \\
6 & 4 & 7 & 5 & 2 & 1 & 3 & 0 \\
\hline
\bf 4 & 2 & \bf 5 & 6 & 3 & 0 & 7 & 1 \\
1 & 3 & 0 & 7 & 6 & 4 & 2 & 5 \\
\hline
2 & 5 & 6 & 1 & 0 & 7 & 4 & 3 \\
7 & 0 & \bf 4 & \bf 3 & 5 & 6 & 1 & 2 \\
\hline
3 & 6 & 1 & 0 & 7 & 2 & 5 & 4 \\
\bf 5 & 7 & \bf 2 & \bf 4 & 1 & 3 & 0 & 6 \\
\hline
\end{array}
&
\hspace{1cm}
L_{58}=\begin{array}{|cccc|cccc|}
\hline
0 & 1 & 2 & 3 & 4 & 5 & 6 & 7 \\
\bf 7 & 4 & \bf 6 & 5 & 2 & 1 & 3 & 0 \\
\hline
5 & 2 & 4 & 6 & 3 & 0 & 7 & 1 \\
1 & 3 & 0 & 7 & 6 & 4 & 2 & 5 \\
\hline
2 & 5 & \bf 7 & 1 & 0 & \bf 6 & 4 & 3 \\
\bf 6 & 0 & 3 & 4 & 5 & \bf 7 & 1 & 2 \\
\hline
3 & 6 & 1 & 0 & 7 & 2 & 5 & 4 \\
4 & 7 & 5 & 2 & 1 & 3 & 0 & 6 \\
\hline
\end{array}
\end{array}$
\end{center}

Only the cases with $h \le w \le 3$ remain.  For $(h,w)=(2,2)$, we again display a Sudoku latin square $L$, followed by a list of Sudoku latin squares $L_i$, each of which intersects $L$ in exactly $i$ positions.  This establishes that $I(2,2)=I(4)$ as stated in Theorem~\ref{thm:int-ls}.

$L=\begin{array}{|cc|cc|}
\hline
0&1&2&3\\ 2&3&0&1\\ \hline 3&0&1&2\\ 1&2&3&0\\
\hline
\end{array}$

\begin{center}
$
\begin{array}{ccc}
L_0=\begin{array}{|cc|cc|}
\hline
3&2&0&1\\ 0&1&3&2\\ \hline 1&3&2&0\\ 2&0&1&3\\
\hline
\end{array}
&
L_1=\begin{array}{|cc|cc|}
\hline
3&2&1&0\\ 0&1&3&2\\ \hline 2&3&0&1\\ 1&0&2&3\\
\hline
\end{array}
&
L_2=\begin{array}{|cc|cc|}
\hline
2&3&0&1\\ 1&0&3&2\\ \hline 3&1&2&0\\ 0&2&1&3\\
\hline
\end{array}\\
\\
L_3=\begin{array}{|cc|cc|}
\hline
2&3&0&1\\ 1&0&2&3\\ \hline 3&2&1&0\\ 0&1&3&2\\
\hline
\end{array}
&
L_4=\begin{array}{|cc|cc|}
\hline
1&3&0&2\\ 2&0&3&1\\ \hline 3&2&1&0\\ 0&1&2&3\\
\hline
\end{array}
&
L_6=\begin{array}{|cc|cc|}
\hline
2&0&1&3\\ 1&3&0&2\\ \hline 3&1&2&0\\ 0&2&3&1\\
\hline
\end{array}\\
\\
L_8=\begin{array}{|cc|cc|}
\hline
0&3&2&1\\ 2&1&0&3\\ \hline 3&2&1&0\\ 1&0&3&2\\
\hline
\end{array}
&
L_9=\begin{array}{|cc|cc|}
\hline
1&0&2&3\\ 2&3&0&1\\ \hline 0&1&3&2\\ 3&2&1&0\\
\hline
\end{array}
&
L_{12}=\begin{array}{|cc|cc|}
\hline
2&1&0&3\\ 0&3&2&1\\ \hline 3&0&1&2\\ 1&2&3&0\\
\hline
\end{array}
\end{array}$
\end{center}
The larger cases $(h,w)=(2,3)$ and $(3,3)$ are addressed in the Appendix.  These, together with Lemmas~\ref{lem:w=4} and \ref{lem:exceptions4} complete the proof of Theorem~\ref{thm:int-sudoku}.

\section{Conclusion and Discussion}

We have shown that Sudoku latin squares with $h \times w$ rectangular cages achieve the same intersection values as (unrestricted) latin squares of order $n=hw$.  This is  unsurprising for large $n$, but perhaps noteworthy that there are no exceptions for smaller sizes.

To conclude, we offer some thoughts on possible next directions for the research.  First, it is natural to be curious about non-rectangular cages, especially in the first interesting case $n=5$.  A `Pentadoku' is a $5 \times 5$ latin square with five pentomino cages (in some arrangement).  As in Sudoku, each cage of a Pentadoku must contain each symbol exactly once. Figure~\ref{fig:pentadoku} shows an example.

\begin{figure}[htbp]
%\begin{minipage}[c]{0.45\linewidth}
\begin{center}
\begin{tikzpicture}[scale=0.8]
\foreach \a in {0,1,...,5}
                \draw (\a,0)--(\a,5);
\foreach \a in {0,1,...,5}
                \draw (0,\a)--(5,\a);
\foreach \a in {0,5}
                \draw[line width=2pt] (\a,0)--(\a,5);
\foreach \a in {0,5}
                \draw[line width=2pt] (0,\a)--(5,\a);
\draw[line width=2pt] (0,4)--(1,4)--(1,3)--(4,3);
\draw[line width=2pt] (2,5)--(2,4)--(4,4)--(4,2)--(5,2);
\draw[line width=2pt] (0,1)--(2,1)--(2,3);
\draw[line width=2pt] (4,0)--(4,1)--(3,1)--(3,2)--(2,2);
%%%
\node at (0.5,4.5) {\small 1};
\node at (1.5,4.5) {\small 2};
\node at (2.5,4.5) {\small 3};
\node at (3.5,4.5) {\small 4};
\node at (4.5,4.5) {\small 5};
%%%
\node at (0.5,3.5) {\small 2};
\node at (1.5,3.5) {\small 4};
\node at (2.5,3.5) {\small 5};
\node at (3.5,3.5) {\small 3};
\node at (4.5,3.5) {\small 1};
%%%
\node at (0.5,2.5) {\small 4};
\node at (1.5,2.5) {\small 3};
\node at (2.5,2.5) {\small 1};
\node at (3.5,2.5) {\small 5};
\node at (4.5,2.5) {\small 2};
%%%
\node at (0.5,1.5) {\small 5};
\node at (1.5,1.5) {\small 1};
\node at (2.5,1.5) {\small 4};
\node at (3.5,1.5) {\small 2};
\node at (4.5,1.5) {\small 3};
%%%
\node at (0.5,0.5) {\small 3};
\node at (1.5,0.5) {\small 5};
\node at (2.5,0.5) {\small 2};
\node at (3.5,0.5) {\small 1};
\node at (4.5,0.5) {\small 4};
\end{tikzpicture} 
\end{center}
\caption{A completed Pentadoku puzzle}
\label{fig:pentadoku}
\end{figure}

Here, we  summarize the outcome of a short computer search on Pentadoku intersections.  Up to symmetries, there are 107 different tilings of a $5 \times 5$ grid using distinct pentominos.  Four of these tilings admit no latin square which satisfies the corresponding cage condition.  Of the remaining 103 tilings, 58 have `full' intersection spectrum equal to $\Upsilon(5)$.  A further 44 tilings achieve `most of' $\Upsilon(5)$, with the exception of some subset of the values $\{1,14,16,17,18\}$.  Finally, one tiling (the one displayed in Figure~\ref{fig:pentadoku}) has a unique latin square up to symbol relabelling.  As a result, this tiling only achieves intersections in 
$\{0,5,10,15,25\}$ (via symbol permutations).
In future work, it may be interesting to consider Sudoku intersections for larger sizes in the presence of non-rectangular cage patterns.  Perhaps some results are possible when cages are `nearly' rectangular.

There are a few conceivable ways to generalize the `dimension' of the Sudoku intersection problem.  First, some preliminary work has been done \cite{Fu-cubes} on intersections of latin cubes and hypercubes.  A Sudoku-type constraint could be imposed and investigated.  However, a full solution of the latin intersection problem, even in dimension three, should probably precede any attempt to impose additional Sudoku conditions.  Alternatively, one could consider traditional latin squares which have more than one simultaneous box/cage partition, say into $h_1 \times w_1$ and $h_2 \times w_2$ rectangles where $n=h_1w_1=h_2w_2$.  Another direction could involve intersections achieved between three or more Sudoku latin squares.  For any of these extensions,  Construction~\ref{cons:triangle}, or some minor extension on this theme, may be useful as a starting point.

Finally, in \cite{DH}, a two-parameter version of latin square intersections was studied.  Let $L$ be a latin square of order
$n$ and $M$ a latin square of order $m \ge n$.  Assume the sets of rows, columns and symbols of $M$ contain the corresponding sets for $L$.  In this way, $L$ and $M$ are partial latin squares on the same rows, columns, and symbols.  Define $L \cap M$ to be the corresponding intersection of partial latin squares.  For fixed integers $m \ge n \ge 1$, the set of all possible values of $|L \cap M|$ was determined in \cite{DH}.  A common extension of this work and ours on Sudoku might be a reasonable problem for future consideration.

\section*{Appendix: $I(2,3)$, $I(3,3)$, $I(3,4)$, $I(4,4)$}

We give data for the remaining cases of pairs $(h,w)$.
For the first two cases, Sudoku latin squares are written in single-line notation.  The integer preceding each square is the intersection number when compared with the last square in the list.  This verifies $I(2,3)=\Upsilon(6)$ and $I(3,3)=\Upsilon(9)$.

$(h,w)=(2,3)$\\
0: \small{\tt 541230|230514|053421|412053|305142|124305}\\
1: \small{\tt 524130|031524|415302|203451|340215|152043}\\
2: \small{\tt 523401|014253|452130|130542|205314|341025}\\
3: \small{\tt 045132|231450|453021|120543|304215|512304}\\
4: \small{\tt 045213|321045|152304|430152|204531|513420}\\
5: \small{\tt 214035|053412|421350|530241|142503|305124}\\
6: \small{\tt 132054|450132|521340|043521|304215|215403}\\
7: \small{\tt 523401|410532|201354|354210|032145|145023}\\
8: \small{\tt 250134|143502|502341|431250|324015|015423}\\
9: \small{\tt 412503|305142|234015|150234|541320|023451}\\
10: \small{\tt 421305|035412|154230|302541|213054|540123}\\
11: \small{\tt 253041|041253|105432|324105|432510|510324}\\
12: \small{\tt 135042|240513|014235|523104|451320|302451}\\
13: \small{\tt 503412|124305|012534|345120|251043|430251}\\
14: \small{\tt 412503|503142|325410|140235|051324|234051}\\
15: \small{\tt 103542|452103|015234|234015|541320|320451}\\
16: \small{\tt 104235|253104|015342|432510|541023|320451}\\
17: \small{\tt 210345|345201|102534|534120|421053|053412}\\
18: \small{\tt 201345|543201|015432|432510|154023|320154}\\
19: \small{\tt 023145|541032|205314|314520|152403|430251}\\
20: \small{\tt 543102|012345|305214|124530|451023|230451}\\
21: \small{\tt 210453|543102|105234|324015|431520|052341}\\
22: \small{\tt 103542|542130|315204|024315|451023|230451}\\
23: \small{\tt 012345|453201|125430|304512|541023|230154}\\
24: \small{\tt 012453|543102|305214|124530|431025|250341}\\
25: \small{\tt 012345|453102|501234|234510|145023|320451}\\
26: \small{\tt 021345|534102|105234|342510|453021|210453}\\
27: \small{\tt 012543|543102|105234|324051|451320|230415}\\
28: \small{\tt 012345|543012|105234|324501|450123|231450}\\
29: \small{\tt 210345|543102|105234|324510|051423|432051}\\
30: \small{\tt 012345|543201|105432|324510|451023|230154}\\
32: \small{\tt 102345|543102|015234|324510|451023|230451}\\
36: \small{\tt 012345|543102|105234|324510|451023|230451}\\

\vspace{1cm}
$(h,w)=(3,3)$\\
0: \small{\tt 364817205|812605473|705234816|576082134|083146527|421573680|238460751|140758362|657321048}\\
1: \small{\tt 503681724|867204315|421753806|145826073|286037451|370415682|758362140|032148567|614570238}\\
2: \small{\tt 143782065|780465321|265103784|524670813|671834502|308521476|017346258|836257140|452018637}\\
3: \small{\tt 605342781|437081526|281765403|823407165|146853072|570126348|764538210|058214637|312670854}\\
4: \small{\tt 026518347|753246801|148370562|602453718|431782650|587601234|315824076|874065123|260137485}\\
5: \small{\tt 536820147|821347605|047156823|402781356|785463210|613205784|168034572|254678031|370512468}\\
6: \small{\tt 086235714|237814605|514706238|408127356|125368470|673450821|752681043|361042587|840573162}\\
7: \small{\tt 647230158|231458076|850167234|084521367|526374810|713806425|102645783|375082641|468713502}\\
8: \small{\tt 370162548|651784023|248530167|827415630|413607852|065823714|534071286|106248375|782356401}\\
9: \small{\tt 526380714|387214605|014756382|402137856|135862470|678405231|253671048|861043527|740528163}\\
10: \small{\tt 874523601|526701438|301684725|137056284|058247163|462138057|685470312|240315876|713862540}\\
11: \small{\tt 648705312|703412856|521368704|254173068|176084235|830256471|367821540|012547683|485630127}\\
12: \small{\tt 821365704|367204518|405781362|142037685|038652147|576148230|683520471|750413826|214876053}\\
13: \small{\tt 342710586|571684203|086235147|607158432|853427610|214306758|135872064|428061375|760543821}\\
14: \small{\tt 862105374|103647258|547382106|756813042|418026735|230754681|371568420|084271563|625430817}\\
15: \small{\tt 421375086|370286145|856041372|682430751|735812604|104657238|043128567|518763420|267504813}\\
16: \small{\tt 064821375|823675410|175304826|516783204|780246531|432510687|301462758|247158063|658037142}\\
17: \small{\tt 038527614|476310852|512648073|253106748|104783265|867452301|740831526|681275430|325064187}\\
18: \small{\tt 816542037|540137682|237086541|781350426|352461708|604728153|025613874|463875210|178204365}\\
19: \small{\tt 052386417|384517260|617402358|760153824|135824706|248760531|406238175|821675043|573041682}\\
20: \small{\tt 617048325|043125786|528367041|864203517|250471638|731586402|306712854|472850163|185634270}\\
21: \small{\tt 148523670|526470381|370618524|034756812|751284063|862031457|613847205|287305146|405162738}\\
22: \small{\tt 378645021|640721358|521038647|854260173|263417805|107853462|036172584|412586730|785304216}\\
23: \small{\tt 046123578|125478630|378506124|830714256|714265803|652830417|201687345|567341082|483052761}\\
24: \small{\tt 031426578|425378160|876501423|683754201|740213856|152860347|504137682|217685034|368042715}\\
25: \small{\tt 012463758|437158260|658720413|863547102|540312876|271806345|704235681|125684037|386071524}\\
26: \small{\tt 061235748|237684150|548701236|856427301|420316875|173850624|704162583|682543017|315078462}\\
27: \small{\tt 158243670|246570831|370618245|035726418|721485063|864031527|612857304|487302156|503164782}\\
28: \small{\tt 528341670|146270583|370658142|032716458|715482036|864035217|651807324|407123865|283564701}\\
29: \small{\tt 813475620|476120358|520683471|051246783|742831065|368057142|684312507|237504816|105768234}\\
30: \small{\tt 415372608|376508124|208641375|821736540|034815762|567024831|643150287|750283416|182467053}\\
31: \small{\tt 024835671|836271450|571604832|152786304|780342165|463150287|605428713|348517026|217063548}\\
32: \small{\tt 815370642|342651708|076482351|201536487|538147260|764208135|683715024|457023816|120864573}\\
33: \small{\tt 062371548|135684270|784502316|416735802|370826451|258410637|503267184|827143065|641058723}\\
34: \small{\tt 421035678|076248135|538671042|842706351|750312864|163854207|604127583|317580426|285463710}\\
35: \small{\tt 412835670|836170254|570642831|054781362|781326045|263054187|645218703|328407516|107563428}\\
36: \small{\tt 721045638|046238157|538671042|852306471|307412865|164857203|670123584|413580726|285764310}\\
37: \small{\tt 017645238|648132750|532708641|870351462|361427805|254860173|786013524|423586017|105274386}\\
38: \small{\tt 214385607|386107452|507624381|051836724|732041865|468752130|623410578|840573216|175268043}\\
39: \small{\tt 072543618|351678240|864102537|740836125|538421706|216750483|603217854|427385061|185064372}\\
40: \small{\tt 152360478|346578201|078412365|805734612|731625840|264801537|413257086|627083154|580146723}\\
41: \small{\tt 712845630|846130257|530672841|054386172|387421065|261057483|678213504|423508716|105764328}\\
42: \small{\tt 572103648|346875210|018642375|850231467|731456082|264780531|623517804|407328156|185064723}\\
43: \small{\tt 512048673|046173285|873652041|381706452|765421308|204385167|650217834|427830516|138564720}\\
44: \small{\tt 021345678|634178520|578602341|852731064|710426835|463850217|287013456|346587102|105264783}\\
45: \small{\tt 072543618|546718230|318602547|837156402|150427863|264830751|605271384|421385076|783064125}\\
46: \small{\tt 012345687|376182540|845607321|584763102|730421865|261850734|103276458|427538016|658014273}\\
47: \small{\tt 012735648|736148250|548602731|851427306|470361825|263850174|607214583|324586017|185073462}\\
48: \small{\tt 072315486|346078251|518642370|864753102|730821645|251460837|103287564|427536018|685104723}\\
49: \small{\tt 047132658|316548270|528607341|835716402|470823165|261450837|603271584|752384016|184065723}\\
50: \small{\tt 028534671|346178250|571602384|853761402|710423865|264850137|632017548|405286713|187345026}\\
51: \small{\tt 013845627|426173058|578602341|854736102|730421865|261058473|102367584|347580216|685214730}\\
52: \small{\tt 042615378|316784250|578230641|861507432|730421865|254863107|603172584|427358016|185046723}\\
53: \small{\tt 012354678|364178250|578602134|851736402|740821365|236045817|407213586|623587041|185460723}\\
54: \small{\tt 012345678|546178302|378620451|851736240|260451837|734802165|103267584|427583016|685014723}\\
55: \small{\tt 512873640|346105278|078642351|831726405|760451832|254380167|603217584|427538016|185064723}\\
56: \small{\tt 012345678|346871205|578602341|451736820|837120564|260458137|605217483|724583016|183064752}\\
57: \small{\tt 012345678|346817052|578602341|851736204|730428165|264150837|407283516|623571480|185064723}\\
58: \small{\tt 012483675|364175208|578602341|831726450|740531862|256840137|603217584|427358016|185064723}\\
59: \small{\tt 012345678|346817250|578602341|751436802|830251467|264780135|603178524|487523016|125064783}\\
60: \small{\tt 210357648|376148250|548602371|854736102|732410865|061825437|603271584|427583016|185064723}\\
61: \small{\tt 512740638|746138250|038652741|851376402|370421865|264805173|603217584|427583016|185064327}\\
62: \small{\tt 012347658|346185270|578602341|851736402|730421865|624850137|263578014|407213586|185064723}\\
63: \small{\tt 012348675|346175280|578602341|854736012|730421856|261850437|403217568|627583104|185064723}\\
64: \small{\tt 012345678|546078231|378162450|854736102|730421865|261850347|603217584|427583016|185604723}\\
65: \small{\tt 012345768|346178250|578602341|851736402|730421586|264850137|603517824|487263015|125084673}\\
66: \small{\tt 082345671|346178520|517602348|851736402|730421865|264850137|603517284|475283016|128064753}\\
67: \small{\tt 028345671|346178250|517602348|251836407|730421865|864750132|603217584|472583016|185064723}\\
68: \small{\tt 012345678|346178205|578602341|831726450|760451832|254830167|603217584|427583016|185064723}\\
69: \small{\tt 012435678|346178250|578602341|857346102|430721865|261850437|603217584|724583016|185064723}\\
70: \small{\tt 012345678|346178250|578062341|854736102|730821465|261450837|603217584|427583016|185604723}\\
71: \small{\tt 012345678|346178250|578602341|251736804|830421567|764850132|603217485|427583016|185064723}\\
72: \small{\tt 012345678|346178205|578602341|831726450|750431862|264850137|603217584|427583016|185064723}\\
73: \small{\tt 082345671|346178250|571602348|158736402|730421865|264850137|603217584|427583016|815064723}\\
74: \small{\tt 012345678|346178250|578602341|851736402|730421865|624850137|403217586|267583014|185064723}\\
75: \small{\tt 012345678|346178250|578602341|185736402|730421865|264850137|603217584|427583016|851064723}\\
77: \small{\tt 012345678|346178250|578602341|851736402|703421865|264850137|630217584|427583016|185064723}\\
81: \small{\tt 012345678|346178250|578602341|851736402|730421865|264850137|603217584|427583016|185064723}
\normalsize

\vspace{2cm}
For cases with $w=4$, it suffices to give the three exceptional cases not covered by Lemma~\ref{lem:w=4}.  For each type, we give a single Sudoku latin square $L$ and three additional squares $L_i$ which intersect $L$ in exactly $i$ positions, where $i$ ranges over $\{n^2-x: x=6,9,11\}$.

\newpage
$(h,w)=(3,4)$
$$\hspace{4mm}
L=\small\begin{array}{|cccc|cccc|cccc|}
\hline
0 & 1 & 2 & 3 & 4 & 5 & 6 & 7 & 8 & 9 & 10 & 11 \\
7 & 4 & 8 & 5 & 9 & 1 & 11 & 10 & 3 & 6 & 2 & 0 \\
11 & 10 & 9 & 6 & 2 & 8 & 3 & 0 & 4 & 5 & 1 & 7 \\
\hline
6 & 8 & 5 & 1 & 3 & 0 & 4 & 2 & 9 & 11 & 7 & 10 \\
10 & 3 & 0 & 11 & 7 & 6 & 5 & 9 & 1 & 4 & 8 & 2 \\
9 & 2 & 7 & 4 & 10 & 11 & 1 & 8 & 5 & 0 & 6 & 3 \\
\hline
5 & 6 & 3 & 9 & 0 & 10 & 8 & 4 & 7 & 2 & 11 & 1 \\
1 & 7 & 11 & 2 & 6 & 3 & 9 & 5 & 0 & 10 & 4 & 8 \\
4 & 0 & 10 & 8 & 1 & 2 & 7 & 11 & 6 & 3 & 5 & 9 \\
\hline
8 & 11 & 1 & 0 & 5 & 9 & 10 & 6 & 2 & 7 & 3 & 4 \\
2 & 5 & 4 & 10 & 8 & 7 & 0 & 3 & 11 & 1 & 9 & 6 \\
3 & 9 & 6 & 7 & 11 & 4 & 2 & 1 & 10 & 8 & 0 & 5 \\
\hline
\end{array}$$

\normalsize
$$L_{133}=\small\begin{array}{|cccc|cccc|cccc|}
\hline
0 & \textbf{2} & \textbf{1} & 3 & 4 & 5 & 6 & 7 & 8 & 9 & 10 & 11 \\
7 & 4 & 8 & 5 & 9 & 1 & 11 & 10 & 3 & 6 & 2 & 0 \\
11 & 10 & 9 & 6 & 2 & 8 & 3 & 0 & 4 & 5 & 1 & 7 \\
\hline
6 & 8 & 5 & 1 & 3 & 0 & 4 & 2 & 9 & 11 & 7 & 10 \\
10 & 3 & \textbf{11} & \textbf{0} & 7 & 6 & 5 & 9 & 1 & 4 & 8 & 2 \\
9 & \textbf{7} & \textbf{2} & 4 & 10 & 11 & 1 & 8 & 5 & 0 & 6 & 3 \\
\hline
5 & 6 & 3 & 9 & 0 & 10 & 8 & 4 & 7 & 2 & 11 & 1 \\
1 & \textbf{11} & \textbf{7} & 2 & 6 & 3 & 9 & 5 & 0 & 10 & 4 & 8 \\
4 & 0 & 10 & 8 & 1 & 2 & 7 & 11 & 6 & 3 & 5 & 9 \\
\hline
8 & \textbf{1} & \textbf{0} & \textbf{11} & 5 & 9 & 10 & 6 & 2 & 7 & 3 & 4 \\
2 & 5 & 4 & 10 & 8 & 7 & 0 & 3 & 11 & 1 & 9 & 6 \\
3 & 9 & 6 & 7 & 11 & 4 & 2 & 1 & 10 & 8 & 0 & 5 \\
\hline
\end{array}$$

\normalsize
$$L_{135}=\small\begin{array}{|cccc|cccc|cccc|}
\hline
\textbf{11} & 1 & 2 & 3 & 4 & 5 & 6 & 7 & 8 & 9 & 10 & \textbf{0} \\
\textbf{0} & 4 & 8 & 5 & 9 & 1 & 11 & 10 & 3 & 6 & 2 & \textbf{7} \\
\textbf{7} & 10 & 9 & 6 & 2 & 8 & 3 & 0 & 4 & 5 & \textbf{11} & \textbf{1} \\
\hline
6 & 8 & 5 & 1 & 3 & 0 & 4 & 2 & 9 & 11 & 7 & 10 \\
10 & 3 & 0 & 11 & 7 & 6 & 5 & 9 & 1 & 4 & 8 & 2 \\
9 & 2 & 7 & 4 & 10 & 11 & 1 & 8 & 5 & 0 & 6 & 3 \\
\hline
5 & 6 & 3 & 9 & 0 & 10 & 8 & 4 & 7 & 2 & \textbf{1} & \textbf{11} \\
1 & 7 & 11 & 2 & 6 & 3 & 9 & 5 & 0 & 10 & 4 & 8 \\
4 & 0 & 10 & 8 & 1 & 2 & 7 & 11 & 6 & 3 & 5 & 9 \\
\hline
8 & 11 & 1 & 0 & 5 & 9 & 10 & 6 & 2 & 7 & 3 & 4 \\
2 & 5 & 4 & 10 & 8 & 7 & 0 & 3 & 11 & 1 & 9 & 6 \\
3 & 9 & 6 & 7 & 11 & 4 & 2 & 1 & 10 & 8 & 0 & 5 \\
\hline
\end{array}$$

\normalsize
$$L_{138}=\small\begin{array}{|cccc|cccc|cccc|}
\hline
0 & 1 & 2 & 3 & 4 & 5 & 6 & 7 & 8 & 9 & 10 & 11 \\
7 & 4 & 8 & 5 & 9 & 1 & 11 & 10 & 3 & 6 & 2 & 0 \\
11 & 10 & 9 & 6 & 2 & 8 & 3 & 0 & 4 & 5 & 1 & 7 \\
\hline
6 & 8 & 5 & 1 & 3 & 0 & 4 & 2 & 9 & 11 & 7 & 10 \\
10 & 3 & 0 & 11 & 7 & 6 & 5 & 9 & 1 & 4 & 8 & 2 \\
9 & 2 & 7 & 4 & 10 & 11 & 1 & 8 & 5 & 0 & 6 & 3 \\
\hline
5 & \textbf{7} & 3 & 9 & \textbf{6} & 10 & 8 & 4 & \textbf{0} & 2 & 11 & 1 \\
1 & \textbf{6} & 11 & 2 & \textbf{0} & 3 & 9 & 5 & \textbf{7} & 10 & 4 & 8 \\
4 & 0 & 10 & 8 & 1 & 2 & 7 & 11 & 6 & 3 & 5 & 9 \\
\hline
8 & 11 & 1 & 0 & 5 & 9 & 10 & 6 & 2 & 7 & 3 & 4 \\
2 & 5 & 4 & 10 & 8 & 7 & 0 & 3 & 11 & 1 & 9 & 6 \\
3 & 9 & 6 & 7 & 11 & 4 & 2 & 1 & 10 & 8 & 0 & 5 \\
\hline
\end{array}$$
\normalsize

\newpage
$(h,w)=(4,4)$

$$\hspace{3mm} L=\tiny \begin{array}{|cccc|cccc|cccc|cccc|}
\hline
0 & 1 & 2 & 3 & 4 & 5 & 6 & 7 & 8 & 9 & 10 & 11 & 12 & 13 & 14 & 15 \\
6 & 10 & 9 & 5 & 13 & 11 & 14 & 12 & 2 & 7 & 4 & 15 & 8 & 0 & 1 & 3 \\
15 & 7 & 14 & 4 & 8 & 10 & 1 & 3 & 5 & 0 & 13 & 12 & 2 & 9 & 11 & 6 \\
8 & 12 & 13 & 11 & 2 & 0 & 15 & 9 & 1 & 14 & 6 & 3 & 4 & 7 & 5 & 10 \\
\hline
3 & 0 & 11 & 6 & 14 & 13 & 2 & 4 & 9 & 10 & 1 & 8 & 7 & 5 & 15 & 12 \\
10 & 5 & 4 & 8 & 12 & 6 & 7 & 15 & 14 & 13 & 2 & 0 & 3 & 11 & 9 & 1 \\
14 & 9 & 12 & 1 & 10 & 3 & 8 & 5 & 15 & 4 & 11 & 7 & 6 & 2 & 0 & 13 \\
2 & 13 & 7 & 15 & 1 & 9 & 11 & 0 & 12 & 6 & 3 & 5 & 10 & 14 & 8 & 4 \\
\hline
5 & 6 & 8 & 9 & 7 & 12 & 13 & 2 & 0 & 11 & 15 & 10 & 1 & 3 & 4 & 14 \\
13 & 14 & 3 & 2 & 6 & 1 & 0 & 11 & 4 & 5 & 8 & 9 & 15 & 12 & 10 & 7 \\
11 & 4 & 0 & 12 & 15 & 8 & 3 & 10 & 13 & 1 & 7 & 14 & 5 & 6 & 2 & 9 \\
1 & 15 & 10 & 7 & 9 & 4 & 5 & 14 & 3 & 2 & 12 & 6 & 11 & 8 & 13 & 0 \\
\hline
12 & 2 & 5 & 14 & 0 & 15 & 10 & 8 & 7 & 3 & 9 & 1 & 13 & 4 & 6 & 11 \\
9 & 8 & 15 & 10 & 11 & 7 & 4 & 13 & 6 & 12 & 0 & 2 & 14 & 1 & 3 & 5 \\
4 & 3 & 1 & 0 & 5 & 2 & 12 & 6 & 11 & 15 & 14 & 13 & 9 & 10 & 7 & 8 \\
7 & 11 & 6 & 13 & 3 & 14 & 9 & 1 & 10 & 8 & 5 & 4 & 0 & 15 & 12 & 2 \\
\hline
\end{array}$$
\normalsize
$$L_{245}=\tiny \begin{array}{|cccc|cccc|cccc|cccc|}
\hline
0 & 1 & 2 & 3 & 4 & 5 & 6 & 7 & 8 & 9 & 10 & 11 & 12 & 13 & 14 & 15 \\
6 & 10 & 9 & 5 & 13 & 11 & 14 & 12 & 2 & 7 & 4 & 15 & 8 & 0 & 1 & 3 \\
15 & 7 & 14 & 4 & 8 & 10 & 1 & 3 & 5 & 0 & 13 & 12 & 2 & 9 & 11 & 6 \\
8 & 12 & 13 & 11 & \textbf{9} & 0 & 15 & \textbf{2} & 1 & 14 & 6 & 3 & 4 & 7 & 5 & 10 \\
\hline
3 & 0 & 11 & 6 & \textbf{2} & 13 & \textbf{4} & \textbf{14} & 9 & 10 & 1 & 8 & 7 & 5 & 15 & 12 \\
10 & 5 & 4 & 8 & 12 & 6 & 7 & 15 & 14 & 13 & 2 & 0 & 3 & 11 & 9 & 1 \\
14 & 9 & 12 & 1 & 10 & 3 & 8 & 5 & 15 & 4 & 11 & 7 & 6 & 2 & 0 & 13 \\
2 & 13 & 7 & 15 & 1 & 9 & 11 & 0 & 12 & 6 & 3 & 5 & 10 & 14 & 8 & 4 \\
\hline
5 & 6 & 8 & 9 & 7 & 12 & \textbf{2} & \textbf{13} & 0 & 11 & 15 & 10 & 1 & 3 & 4 & 14 \\
13 & 14 & 3 & 2 & 6 & 1 & 0 & 11 & 4 & 5 & 8 & 9 & 15 & 12 & 10 & 7 \\
11 & 4 & 0 & 12 & 15 & 8 & 3 & 10 & 13 & 1 & 7 & 14 & 5 & 6 & 2 & 9 \\
1 & 15 & 10 & 7 & \textbf{14} & 4 & 5 & \textbf{9} & 3 & 2 & 12 & 6 & 11 & 8 & 13 & 0 \\
\hline
12 & 2 & 5 & 14 & 0 & 15 & 10 & 8 & 7 & 3 & 9 & 1 & 13 & 4 & 6 & 11 \\
9 & 8 & 15 & 10 & 11 & 7 & \textbf{13} & \textbf{4} & 6 & 12 & 0 & 2 & 14 & 1 & 3 & 5 \\
4 & 3 & 1 & 0 & 5 & 2 & 12 & 6 & 11 & 15 & 14 & 13 & 9 & 10 & 7 & 8 \\
7 & 11 & 6 & 13 & 3 & 14 & 9 & 1 & 10 & 8 & 5 & 4 & 0 & 15 & 12 & 2 \\
\hline
\end{array}$$
\normalsize
$$L_{247}=\tiny \begin{array}{|cccc|cccc|cccc|cccc|}
\hline
0 & 1 & 2 & 3 & 4 & 5 & 6 & 7 & 8 & 9 & 10 & 11 & 12 & 13 & 14 & 15 \\
6 & 10 & 9 & 5 & 13 & \textbf{14} & \textbf{1} & 12 & 2 & 7 & 4 & 15 & 8 & 0 & \textbf{11} & 3 \\
15 & 7 & 14 & 4 & 8 & 10 & \textbf{11} & 3 & 5 & 0 & 13 & 12 & 2 & 9 & \textbf{1} & 6 \\
8 & 12 & 13 & 11 & 2 & 0 & 15 & 9 & 1 & 14 & 6 & 3 & 4 & 7 & 5 & 10 \\
\hline
3 & 0 & 11 & 6 & 14 & 13 & 2 & 4 & 9 & 10 & 1 & 8 & 7 & 5 & 15 & 12 \\
10 & 5 & 4 & 8 & 12 & 6 & 7 & 15 & 14 & 13 & 2 & 0 & 3 & 11 & 9 & 1 \\
14 & 9 & 12 & 1 & 10 & 3 & 8 & 5 & 15 & 4 & 11 & 7 & 6 & 2 & 0 & 13 \\
2 & 13 & 7 & 15 & 1 & \textbf{11} & \textbf{9} & 0 & 12 & 6 & 3 & 5 & 10 & 14 & 8 & 4 \\
\hline
5 & 6 & 8 & 9 & 7 & 12 & 13 & 2 & 0 & 11 & 15 & 10 & 1 & 3 & 4 & 14 \\
13 & 14 & 3 & 2 & 6 & 1 & 0 & 11 & 4 & 5 & 8 & 9 & 15 & 12 & 10 & 7 \\
11 & 4 & 0 & 12 & 15 & 8 & 3 & 10 & 13 & 1 & 7 & 14 & 5 & 6 & 2 & 9 \\
1 & 15 & 10 & 7 & 9 & 4 & 5 & 14 & 3 & 2 & 12 & 6 & 11 & 8 & 13 & 0 \\
\hline
12 & 2 & 5 & 14 & 0 & 15 & 10 & 8 & 7 & 3 & 9 & 1 & 13 & 4 & 6 & 11 \\
9 & 8 & 15 & 10 & 11 & 7 & 4 & 13 & 6 & 12 & 0 & 2 & 14 & 1 & 3 & 5 \\
4 & 3 & 1 & 0 & 5 & 2 & 12 & 6 & 11 & 15 & 14 & 13 & 9 & 10 & 7 & 8 \\
7 & 11 & 6 & 13 & 3 & \textbf{9} & \textbf{14} & 1 & 10 & 8 & 5 & 4 & 0 & 15 & 12 & 2 \\
\hline
\end{array}$$
\normalsize
$$L_{250}=\tiny \begin{array}{|cccc|cccc|cccc|cccc|}
\hline
0 & 1 & 2 & 3 & 4 & 5 & 6 & 7 & 8 & 9 & 10 & 11 & 12 & 13 & 14 & 15 \\
6 & 10 & 9 & 5 & 13 & 11 & 14 & 12 & 2 & 7 & 4 & 15 & 8 & 0 & 1 & 3 \\
15 & 7 & 14 & 4 & 8 & 10 & 1 & 3 & 5 & 0 & 13 & 12 & 2 & 9 & 11 & 6 \\
8 & 12 & \textbf{11} & \textbf{13} & 2 & 0 & 15 & 9 & 1 & 14 & 6 & 3 & 4 & 7 & 5 & 10 \\
\hline
3 & 0 & \textbf{6} & \textbf{11} & 14 & 13 & 2 & 4 & 9 & 10 & 1 & 8 & 7 & 5 & 15 & 12 \\
10 & 5 & 4 & 8 & 12 & 6 & 7 & 15 & 14 & 13 & 2 & 0 & 3 & 11 & 9 & 1 \\
14 & 9 & 12 & 1 & 10 & 3 & 8 & 5 & 15 & 4 & 11 & 7 & 6 & 2 & 0 & 13 \\
2 & 13 & 7 & 15 & 1 & 9 & 11 & 0 & 12 & 6 & 3 & 5 & 10 & 14 & 8 & 4 \\
\hline
5 & 6 & 8 & 9 & 7 & 12 & 13 & 2 & 0 & 11 & 15 & 10 & 1 & 3 & 4 & 14 \\
13 & 14 & 3 & 2 & 6 & 1 & 0 & 11 & 4 & 5 & 8 & 9 & 15 & 12 & 10 & 7 \\
11 & 4 & 0 & 12 & 15 & 8 & 3 & 10 & 13 & 1 & 7 & 14 & 5 & 6 & 2 & 9 \\
1 & 15 & 10 & 7 & 9 & 4 & 5 & 14 & 3 & 2 & 12 & 6 & 11 & 8 & 13 & 0 \\
\hline
12 & 2 & 5 & 14 & 0 & 15 & 10 & 8 & 7 & 3 & 9 & 1 & 13 & 4 & 6 & 11 \\
9 & 8 & 15 & 10 & 11 & 7 & 4 & 13 & 6 & 12 & 0 & 2 & 14 & 1 & 3 & 5 \\
4 & 3 & 1 & 0 & 5 & 2 & 12 & 6 & 11 & 15 & 14 & 13 & 9 & 10 & 7 & 8 \\
7 & 11 & \textbf{13} & \textbf{6} & 3 & 14 & 9 & 1 & 10 & 8 & 5 & 4 & 0 & 15 & 12 & 2 \\
\hline
\end{array}$$
\normalsize

\section*{Acknowledgements}

Research of Peter Dukes is supported by NSERC Discovery Grant RGPIN-312595-2023.

%% reference citations %%%%%%%%%%%%%%%%%%%%%%%%%%%%%%%%%%%%%%%

\end{document}